\numberwithin{equation}{section}
\newtheorem{thm}{Theorem}[section]
\newtheorem{lma}[thm]{Lemma}
\newtheorem{cor}[thm]{Corollary}
\newtheorem{ques}[thm]{Question}
\renewcommand{\geq}{\geqslant}
\renewcommand{\leq}{\leqslant}
\title{Remarks on the analyticity of subadditive pressure for products of triangular matrices}
\author{Jonathan M. Fraser\\ \\
School of Mathematics, The University of Manchester,\\ Manchester, M13 9PL, UK\\
	E-mail: jonathan.fraser@manchester.ac.uk}
\begin{document}
\maketitle

\begin{abstract}
We study Falconer's subadditive pressure function with emphasis on analyticity.  We begin by deriving a simple closed form expression for the pressure in the case of diagonal matrices and, by identifying phase transitions with zeros of Dirichlet polynomials, use this to deduce that the pressure is piecewise real analytic.  We then specialise to the iterated function system setting and use a result of Falconer and Miao to extend our results to include the pressure for systems generated by matrices which are simultaneously triangularisable.  Our closed form expression for the pressure simplifies a similar expression given by Falconer and Miao by reducing the number of equations needing to be solved by an exponential factor.  Finally we present some examples where the pressure has a phase transition at a non-integer value and pose some open questions.
\\ \\
\emph{Mathematics Subject Classification} 2010:  primary: 37D35, secondary: 37C45, 37D20. \\ \\
\emph{Key words and phrases}: subadditive pressure, analytic function, thermodynamic formalism.
\end{abstract}

\section{Introduction}

Let $n \in \mathbb{N}$ and $\{A_i\}_{ i \in \mathcal{I}}$ be a finite collection of $n \times n$ non-singular matrices.  We define the \emph{subadditive pressure} for this system following Falconer \cite{affine}.  Let $\mathcal{I}^* = \bigcup_{k\geq1} \mathcal{I}^k$ denote the set of all finite sequences with entries in $\mathcal{I}$ and for
\[
\textbf{\emph{i}}= \big(i_1, i_2, \dots, i_k \big) \in \mathcal{I}^*
\]
write
\[
A_{\textbf{\emph{i}}} = A_{i_1} \circ A_{i_2} \circ \dots \circ A_{i_k}
\]
and  $\alpha_1(\textbf{\emph{i}}) \geq  \dots \geq \alpha_n(\textbf{\emph{i}})>0$ for the singular values of $A_\textbf{\emph{i}}$.  The \emph{singular values} of a linear map $A$ are the positive square roots of the eigenvalues of $A^T A$.  They are also the lengths of the semi-axes of the image of the unit ball under $A$ and thus correspond to how much $A$ contracts or expands in different directions.  For $s \in [0,n)$ the \emph{singular value function} $\phi^s: \mathcal{I}^* \to (0,\infty)$ is defined by
\[
\phi^s(\textbf{\emph{i}})  =\alpha_1(\textbf{\emph{i}}) \alpha_2(\textbf{\emph{i}})  \cdots \alpha_{m}(\textbf{\emph{i}}) \alpha_{m+1}(\textbf{\emph{i}})^{s-m}  
\]
where $m \in \{0, \dots, n-1\}$ is the unique non-negative integer satisfying $m \leq s < m+1$.  The singular value function leads us to define the \emph{pressure} $P : [0,n) \to \mathbb{R}$ corresponding to the system $\{A_i\}_{ i \in \mathcal{I}}$ by
\[
P(s) = \lim_{k \to \infty} \frac{1}{k} \log \sum_{\textbf{\emph{i}} \in \mathcal{I}^k} \phi^s(\textbf{\emph{i}})
\]
where the limit exists since the singular value function is submultiplicative in $\textbf{\emph{i}}$, i.e.
\[
\phi^s(\textbf{\emph{i}}\, \textbf{\emph{j}}) \leq \phi^s(\textbf{\emph{i}}) \, \phi^s(\textbf{\emph{j}})
\]
for all $\textbf{\emph{i}}, \, \textbf{\emph{j}} \in \mathcal{I}^*$, see \cite[Lemma 2.1]{affine}.  It is convenient to extend the domain of $P$ to $[0,\infty)$ and so we let
\[
P(s) =  \log \sum_{i \in \mathcal{I}} \det(A_i)^{s/n}
\]
for $s \geq n$.  Here the pressure is defined without the need for a limit as the determinant is multiplicative.  It is easy to see that $P$ is continuous on $[0,\infty)$ and convex on each interval $(m,m+1)$, with $m \in \{0,\dots, n-1\}$, and on $(n, \infty)$. Moreover, it is easy to contruct examples where the pressure is not convex on an interval containing an integer; see Section \ref{examplessection}.  It is a simple consequence of piecewise convexity that $P$ is differentiable at all but at most countably many points and semi-differentiable everywhere.  The main focus of this article is to study real analyticity of the pressure and our main application is that the pressure is always piecewise real analytic for products of matrices which are simultaneously triangularisable, see Corollaries \ref{mainanalytic} and \ref{mainanalytictri}.  Moreover, the number of phase transitions, and therefore points where the pressure is not smooth, can be bounded in terms of the spatial dimension and the number of matrices.  We also provide examples showing that the pressure can have phase transitions at non-integer values.  Phase transitions in the interval $(0,1)$ have previously been exhibited by K\"aenm\"aki and Vilppolainen \cite[Example 6.5]{kaenmakisub}.
\\ \\
We say a real valued function on some domain $D \subseteq \mathbb{R}$ is \emph{piecewise real analytic} if $D$ can be written as the closure of the union of a finite collection of open (possibly unbounded) intervals with the function being real analytic on each interval.  The boundary points of the open intervals which are in the interior of $D$ are called \emph{phase transitions}, provided that the function is not real analytic on any neighbourhood of the point.  Note that if a piecewise real analytic function is continuous, then it is completely defined by its values on the open intervals where it is real analytic.
\\ \\
One of the main applications of the subadditive pressure function discussed in this paper is in the study of self-affine fractals.  In particular, if the matrices $\{A_i\}_{i \in \mathcal{I}}$ are chosen to be contractions and to each matrix we associate a translation vector $t_i \in \mathbb{R}^n$, then we have an iterated function system $\{A_i+t_i\}_{i \in \mathcal{I}}$, which has a unique non-empty compact attractor $F$, called the self-affine set for the system.  Alternatively, assuming some separation conditions, one can view $F$ as the repeller of a uniformly expanding map defined by the inverse branches of the contraction mappings.  In either case, the pressure is related to many interesting geometric properties of $F$ and the associated dynamical system.  Perhaps most notably the unique zero of the pressure gives an upper bound for the Hausdorff dimension of $F$ and a `best guess' for the actual Hausdorff dimension.  These ideas date back to Douady-Oesterl\'e \cite{douady} and Falconer \cite{affine, affine2}.  In \cite{affine} Falconer proved that the zero of the pressure gives the Hausdorff dimension of $F$ for Lebesgue almost all choices of $\{t_i\}_{i \in \mathcal{I}}$ provided the matrices all have singular values strictly less than $1/3$, which was relaxed to the optimal constant $1/2$ by Solomyak \cite{solomyak}.  Since then the subadditive pressure, and several related functions, have received a lot of attention in the literature on self-affine fractals and non-conformal dynamics.  There have also been several extensions of these ideas to nonlinear systems, see Falconer \cite{falconerrepeller} and Barreira \cite{barreira}.  Due to their focus on upper triangular systems, the papers of Falconer-Miao \cite{miao}, Falconer-Lammering \cite{lammering}, Manning-Simon \cite{manning} and B\'ar\'any \cite{barany} are particularly relevant to our study.
\\ \\
It is worth remarking that \emph{additive pressure functions} associated to uniformly hyperbolic dynamical systems and self-conformal fractals were studied before the more complicated subadditive analogues, see \cite{bowen, bowen2, ruelle}.  The additive setting is rather simpler and if the associated potential is taken to be the appropriate analogue of the singular value function, then the pressure is real analytic on its whole domain.  This is a special case of a more general result of Ruelle \cite{ruelle}.  The proof relies on a transfer operator approach, which does not apply in the non-conformal (or self-affine) setting.
\\ \\
One of the reasons the analyticity (or differentiability) of the pressure is interesting is that it is related to the number of ergodic equilibrium measures for the pressure (this was drawn to our attention by Pablo Shmerkin).  Indeed such links have been investigated by Feng-K\"aenm\"aki \cite{fengkaenmaki} and Guivarc'h-Le Page \cite{lepage}, albeit in a slightly different context.

\section{Results}

\subsection{Subadditive pressure for diagonal matrices}

Suppose the matrices $\{A_i\}_{ i \in \mathcal{I}}$ are all diagonal and write $c_1(i), \dots, c_n(i)>0$ for the absolute values of the diagonal entries of $A_i$.  Note that the sets $\{c_1, \dots, c_n\}$ and $\{\alpha_1(i), \dots, \alpha_n(i)\}$ are equal but one cannot say anything about the relative ordering.  Indeed, once one starts composing diagonal matrices, the order in which the singular values appear down the main diagonal of the matrix can change, which is one of the main difficulties in computing the pressure.  For $\textbf{\emph{i}} = (i_1, i_2, \dots, i_k) \in \mathcal{I}^*$ write $c_1(\textbf{\emph{i}}), \dots, c_n(\textbf{\emph{i}})$ for the diagonal entries of $A_\textbf{\emph{i}}$, noting that
\[
 c_l(\textbf{\emph{i}}) = c_l(i_1) \cdots c_l(i_k)
\]
for each $l \in \{1, \dots, n\}$.  Let $S_n$ be the symmetric group on $\{1, \dots, n\}$ and for each $\sigma \in S_n$ and $s \in [0,n)$ we define the \emph{$\sigma$-ordered singular value function} $\phi^s_\sigma:\mathcal{I}^* \to (0,\infty)$ by
\[
\phi^s_\sigma(\textbf{\emph{i}})  =c_{\sigma(1)} (\textbf{\emph{i}}) c_{\sigma(2)}(\textbf{\emph{i}})  \cdots c_{\sigma(m)}(\textbf{\emph{i}}) c_{\sigma(m+1)}(\textbf{\emph{i}})^{s-m}  
\]
where $m \in \{ 0, \dots, n-1\}$ is the unique non-negative integer satisfying $m \leq s < m+1$.  The key advantage of these ordered singular value functions is that they are multiplicative in $\textbf{\emph{i}}$ instead of only submultiplicative, i.e.
\[
\phi_\sigma^s(\textbf{\emph{i}}\, \textbf{\emph{j}}) = \phi_\sigma^s(\textbf{\emph{i}}) \, \phi_\sigma^s(\textbf{\emph{j}})
\]
for all $\textbf{\emph{i}}, \, \textbf{\emph{j}} \in \mathcal{I}^*$ and $\sigma \in S_n$.  This allows us to define the associated pressure by means of a closed form expression, without taking a limit.  More precisely, we define the \emph{$\sigma$-ordered pressure} $P_\sigma : [0,n) \to \mathbb{R}$ by
\[
P_\sigma(s) = \log \sum_{i \in \mathcal{I}} \phi_\sigma^s(i)
\]
and observe that
\[
\sum_{\textbf{\emph{i}} \in \mathcal{I}^k} \phi_\sigma^s(\textbf{\emph{i}}) = \Bigg(\sum_{i \in \mathcal{I}} \phi_\sigma^s(i) \bigg)^k
\]
for all $k \in \mathbb{N}$. We extend the domain of each $P_\sigma$ to $[0,\infty)$ as before by setting $P_\sigma(s) = P(s)$ for $s \geq n$, since the ordering of the diagonal entries of a diagonal matrix does not change the determinant.  Again, it is easy to see that $P_\sigma$ is continuous on $[0,\infty)$ and convex on each interval $(m,m+1)$, with $m \in \{0,\dots, n-1\}$, and on $(n, \infty)$.  Moreover, it is immediate that $P_\sigma$ is piecewise real analytic, with the only possible phase transitions occurring at the points $\{1, \dots, n\}$.
\begin{thm} \label{mainmax}
For all $s \in [0, \infty)$ we have
\[
P(s) = \max_{\sigma \in S_n} P_\sigma(s).
\]
\end{thm}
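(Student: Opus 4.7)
The plan is to establish the pointwise identity $\phi^s(\textbf{\emph{i}}) = \max_{\sigma \in S_n} \phi^s_\sigma(\textbf{\emph{i}})$ for every $\textbf{\emph{i}} \in \mathcal{I}^*$ and $s \in [0,n)$, and then to pass to the pressure using the multiplicativity of each $\phi^s_\sigma$ together with the standard sum-versus-max passage under $\tfrac{1}{k}\log$. For $s \geq n$ the equality $P(s) = P_\sigma(s)$ is built into the definitions since the determinant of a diagonal matrix does not depend on the ordering of its entries, so I would focus on the case $s \in [m, m+1)$ with $m \in \{0, \ldots, n-1\}$.

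For the pointwise identity I would fix $\textbf{\emph{i}}$ and note that $\alpha_1(\textbf{\emph{i}}), \ldots, \alpha_n(\textbf{\emph{i}})$ is exactly the decreasing rearrangement of $c_1(\textbf{\emph{i}}), \ldots, c_n(\textbf{\emph{i}})$. Taking logarithms, $\log \phi^s_\sigma(\textbf{\emph{i}})$ is a linear combination of $\log c_1(\textbf{\emph{i}}), \ldots, \log c_n(\textbf{\emph{i}})$ whose coefficient vector is obtained by reordering $(1, \ldots, 1, s-m, 0, \ldots, 0)$ according to $\sigma$. Crucially, because $s-m \in [0,1)$, this weight vector is itself non-increasing, so the rearrangement inequality says that the linear combination is maximized exactly when the weights are paired with the $\log c_l(\textbf{\emph{i}})$ in matching (decreasing) order, i.e.\ when $\sigma$ sorts the diagonal entries in decreasing order, and the maximum value is precisely $\log \phi^s(\textbf{\emph{i}})$.

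With the pointwise identity in hand, one direction becomes immediate: summing $\phi^s_\sigma(\textbf{\emph{i}}) \leq \phi^s(\textbf{\emph{i}})$ over $\textbf{\emph{i}} \in \mathcal{I}^k$ and using multiplicativity to rewrite the left-hand side as $\exp(k P_\sigma(s))$ gives $P_\sigma(s) \leq P(s)$ for each $\sigma$, hence $\max_\sigma P_\sigma(s) \leq P(s)$. For the reverse inequality, the crude bound $\phi^s(\textbf{\emph{i}}) \leq \sum_{\sigma \in S_n} \phi^s_\sigma(\textbf{\emph{i}})$ would yield
\[
\sum_{\textbf{\emph{i}} \in \mathcal{I}^k} \phi^s(\textbf{\emph{i}}) \;\leq\; \sum_{\sigma \in S_n} \exp\bigl(k\, P_\sigma(s)\bigr),
\]
and since the right-hand side is a finite sum of $n!$ exponentials, taking $\tfrac{1}{k}\log$ and letting $k \to \infty$ recovers exactly $\max_\sigma P_\sigma(s)$, completing the argument.

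No step here looks genuinely hard: the only mildly delicate observation is that the weight vector $(1, \ldots, 1, s-m, 0, \ldots, 0)$ is non-increasing precisely because $0 \leq s-m < 1$, which is what lets the rearrangement inequality be applied in one stroke without any case analysis on $s$.
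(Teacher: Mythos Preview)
Your proof is correct and follows essentially the same route as the paper: establish the pointwise identity $\phi^s(\textbf{\emph{i}}) = \max_{\sigma} \phi^s_\sigma(\textbf{\emph{i}})$, then use multiplicativity of each $\phi^s_\sigma$ together with the sandwich $\max_\sigma \phi^s_\sigma \leq \phi^s \leq \sum_\sigma \phi^s_\sigma$ to pass to the pressure via $\tfrac{1}{k}\log$. The only difference is in the proof of the pointwise identity: you invoke the rearrangement inequality (using that the weight vector $(1,\ldots,1,s-m,0,\ldots,0)$ is non-increasing since $0 \leq s-m < 1$), whereas the paper argues it by a direct ratio computation; your version is arguably cleaner and avoids the small case analysis.
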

We will prove Theorem \ref{mainmax} in Section \ref{mainmaxproof}.  In the case of $2 \times 2$ matrices, where the pressure is the maximum of two functions, this can be found in \cite{manning}.  In fact \cite{manning} dealt with certain nonlinear maps corresponding to upper triangular matrices.    The key point of this result is that we have a closed form expression for the pressure, which is very useful for computational purposes and for analysing differentiability and analyticity, since differentiating a function defined by a limit is awkward.  First and foremost, by identifying phase transitions in the pressure with zeros of Dirichlet polynomials, we can deduce the following result.
\begin{cor} \label{mainanalytic}
For products of non-singular diagonal matrices, the pressure is piecewise real analytic.
\end{cor}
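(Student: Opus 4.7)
The plan is to combine Theorem \ref{mainmax}, which expresses $P$ as the maximum of the finitely many $\sigma$-ordered pressures $P_\sigma$, with the fact that each $P_\sigma$ is already piecewise real analytic (with the only possible phase transitions being at the integers $1,\dots,n$). The maximum of finitely many real analytic functions on an open interval is piecewise real analytic provided that any two of them either coincide identically or agree only on a discrete subset. Establishing this dichotomy via the Dirichlet polynomial identification suggested in the introduction is the crux of the argument.

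First, I would work on a single open interval of the form $(m,m+1)$ for $m\in\{0,\dots,n-1\}$, or on $(n,\infty)$, on which every $P_\sigma$ is real analytic. For any pair $\sigma,\tau\in S_n$, set $a_{i,\sigma}=c_{\sigma(1)}(i)\cdots c_{\sigma(m)}(i)$ and $b_{i,\sigma}=c_{\sigma(m+1)}(i)$, so that on $(m,m+1)$
\[
e^{P_\sigma(s)}-e^{P_\tau(s)}=\sum_{i\in\mathcal{I}}\bigl(a_{i,\sigma}\,b_{i,\sigma}^{s-m}-a_{i,\tau}\,b_{i,\tau}^{s-m}\bigr).
\]
Writing $b^{s-m}=e^{(s-m)\log b}$ and collecting like exponents, this is a Dirichlet polynomial $f(s)=\sum_j \gamma_j e^{\delta_j s}$ in at most $2|\mathcal{I}|$ distinct exponentials.

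Next I would invoke the standard fact that such an $f$, unless identically zero, has at most $2|\mathcal{I}|-1$ real zeros (a straightforward induction using Rolle's theorem, or Descartes' rule of signs). Consequently, for each pair $(\sigma,\tau)$, either $P_\sigma\equiv P_\tau$ on $(m,m+1)$, or the set $\{s\in(m,m+1):P_\sigma(s)=P_\tau(s)\}$ is finite. Let $Z_m$ denote the union, over pairs with $P_\sigma\not\equiv P_\tau$ on this interval, of these finite zero sets; then $Z_m$ is finite.

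On each of the finitely many open connected components of $(m,m+1)\setminus Z_m$ the ordering of the values $P_\sigma(s)$ is constant in $\sigma$, so $P(s)=\max_\sigma P_\sigma(s)$ agrees with a single $P_\sigma$ and is therefore real analytic there. Running the same argument on $(n,\infty)$ (where in fact $e^{P(s)}$ is already a single Dirichlet polynomial, so no maximum is needed) and assembling the pieces across all $m\in\{0,\dots,n-1\}$, together with the integers $1,\dots,n$ as additional potential phase transition points, yields the desired piecewise real analytic structure on $[0,\infty)$. The main obstacle is Step~3, the Dirichlet polynomial dichotomy; the rest is routine bookkeeping about maxima of finitely many analytic functions. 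An added benefit is a quantitative bound on the number of phase transitions in terms of $n$ and $|\mathcal{I}|$, as advertised in the introduction.
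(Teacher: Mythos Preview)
Your proposal is correct and follows essentially the same route as the paper: both reduce piecewise analyticity on each $(m,m+1)$ to showing that $e^{P_\sigma(s)}-e^{P_\tau(s)}=\sum_{i}\phi_\sigma^s(i)-\sum_{i}\phi_\tau^s(i)$ is a Dirichlet polynomial in at most $2\lvert\mathcal{I}\rvert$ exponentials, hence has at most $2\lvert\mathcal{I}\rvert-1$ zeros unless identically zero. Your explicit bookkeeping (the set $Z_m$ and the constancy of the ordering on components) is slightly more detailed than the paper's version, but the key idea and the resulting bound are identical.
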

We will prove Corollary \ref{mainanalytic} in Section \ref{mainanalyticproof}. We are able to bound the number of phase transitions (and therefore the number of `pieces' in the piecewise decomposition of $P$) in terms of the number of matrices $\lvert \mathcal{I} \rvert$ and the spatial dimension $n$, however we defer discussion of the explicit bound until Sections \ref{mainanalyticproof} and \ref{questions}.  It is now possible to give various sufficient conditions for $P$ to be real analytic on the whole interval $(m,m+1)$, however, we refrain from stating a myriad of different examples because in practice one would simply plot the different ordered pressures and observe which is the maximum.  Then on any interval where one ordered pressure is bigger than or equal to all the others, $P$ is real analytic.  However, we do state one sufficiency result which we find particularly intuitive.
\begin{cor} \label{mainanal}
Let $m \in \{0, \dots, n-1\}$.  If there exists $\sigma \in S_n$ such that for all $i \in \mathcal{I}$
\[
\{\alpha_1(i), \dots, \alpha_{m}(i)\} = \{ c_{\sigma(1)}(i), \dots, c_{\sigma(m)}(i)\}
\]
and
\[
\alpha_{m+1}(i) =  c_{\sigma(m+1)}(i),
\]
then
\[
P(s) = P_\sigma(s)
\]
for all $s \in [m,m+1]$ and, in particular, the pressure is real analytic on $(m,m+1)$.
\end{cor}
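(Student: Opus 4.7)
The plan is to reduce the claim to Theorem \ref{mainmax} by showing that under the hypothesis, the singular value function $\phi^s$ coincides with the $\sigma$-ordered singular value function $\phi^s_\sigma$ on every finite word $\textbf{\emph{i}} \in \mathcal{I}^*$, for all $s \in [m,m+1]$. Once that is established, the sums defining $P$ and $P_\sigma$ are identical, so $P(s) = P_\sigma(s)$ and real analyticity on $(m,m+1)$ is immediate because each $P_\sigma$ is already known to be piecewise real analytic with possible phase transitions only at integers.

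First I would unpack the hypothesis at the level of a single matrix. Since $\alpha_1(i) \geq \cdots \geq \alpha_n(i)$ is just the decreasing rearrangement of $c_1(i),\dots,c_n(i)$, the set equality $\{\alpha_1(i),\dots,\alpha_m(i)\} = \{c_{\sigma(1)}(i),\dots,c_{\sigma(m)}(i)\}$ together with $\alpha_{m+1}(i) = c_{\sigma(m+1)}(i)$ gives the two-sided separation
\[
c_{\sigma(j)}(i) \geq c_{\sigma(m+1)}(i) \quad \text{for } 1 \leq j \leq m, \qquad c_{\sigma(j)}(i) \leq c_{\sigma(m+1)}(i) \quad \text{for } m+2 \leq j \leq n,
\]
for every $i \in \mathcal{I}$. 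The key step is to promote this inequality to finite words. Because each diagonal entry is multiplicative along compositions, i.e.\ $c_l(\textbf{\emph{i}}) = c_l(i_1)\cdots c_l(i_k)$, taking products of the single-matrix inequalities across $i_1,\dots,i_k$ preserves the direction of the inequality and yields
\[
c_{\sigma(j)}(\textbf{\emph{i}}) \geq c_{\sigma(m+1)}(\textbf{\emph{i}}) \quad (1 \leq j \leq m), \qquad c_{\sigma(j)}(\textbf{\emph{i}}) \leq c_{\sigma(m+1)}(\textbf{\emph{i}}) \quad (m+2 \leq j \leq n)
\]
for every $\textbf{\emph{i}} \in \mathcal{I}^*$. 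Therefore, for the product matrix $A_\textbf{\emph{i}}$, the $m$ largest diagonal entries are precisely those at positions $\sigma(1),\dots,\sigma(m)$, and the $(m+1)$-th largest is $c_{\sigma(m+1)}(\textbf{\emph{i}})$. In other words, the same permutation $\sigma$ that orders the first $m+1$ singular values for each generator also orders the first $m+1$ singular values of every composition.

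From this identification, the singular value function $\phi^s(\textbf{\emph{i}})$ equals $\phi^s_\sigma(\textbf{\emph{i}})$ for every $\textbf{\emph{i}}$ and every $s \in [m,m+1)$, since both expressions multiply the same $m$ largest entries and raise the same $(m+1)$-th entry to the power $s-m$. Summing over $\mathcal{I}^k$, taking logarithms, dividing by $k$ and letting $k \to \infty$ gives $P(s) = P_\sigma(s)$ on $[m,m+1)$, and continuity of both functions extends this to the closed interval $[m,m+1]$. Real analyticity of $P$ on $(m,m+1)$ then follows at once from the closed-form definition of $P_\sigma$, which is manifestly real analytic on each interval $(m,m+1)$.

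There is no serious obstacle here; the substantive content is the observation that the separation hypothesis is preserved under composition precisely because diagonal entries multiply coordinatewise. It is worth noting that the hypothesis on $\alpha_{m+1}(i)$ (not merely on the set of the top $m$) is essential for this argument: without it one cannot conclude that the $(m+1)$-th largest entry of $A_\textbf{\emph{i}}$ sits at position $\sigma(m+1)$, and the identification of $\phi^s$ with $\phi^s_\sigma$ breaks down at non-integer values of $s$.
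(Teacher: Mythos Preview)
Your argument is correct but takes a somewhat different route from the paper's. The paper works only at the level of single letters: from the hypothesis and Lemma~\ref{lemma1} it deduces $\phi_\sigma^s(i) = \max_{\sigma'} \phi_{\sigma'}^s(i)$ for every $i \in \mathcal{I}$, which immediately gives $P_\sigma(s) \geq P_{\sigma'}(s)$ for all $\sigma'$, and then Theorem~\ref{mainmax} yields $P(s) = P_\sigma(s)$. You instead promote the separation inequalities from single letters to arbitrary finite words via coordinatewise multiplicativity of the diagonal entries, obtaining $\phi^s(\textbf{\emph{i}}) = \phi_\sigma^s(\textbf{\emph{i}})$ for every $\textbf{\emph{i}} \in \mathcal{I}^*$, and then compute the pressure directly from the defining limit. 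Your approach is more self-contained in that it does not actually invoke Theorem~\ref{mainmax} (despite your opening sentence claiming it will), while the paper's is shorter because it reuses the machinery already established. Both are equally valid; the substantive observation in each case is the same identification at the level of generators.
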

We will prove Corollary \ref{mainanal} in Section \ref{mainanalproof}.  Notice that (especially for large $m$) the sufficient condition for analyticity given above is weaker than requiring $\alpha_1(i) = c_{\sigma(1)}(i)$, $\dots$, $\alpha_{m}(i) = c_{\sigma(m)}(i)$ and $\alpha_{m+1}(i) =  c_{\sigma(m+1)}(i)$.  However, in that more restrictive setting, we get the following precise corollary.
\begin{cor}
If there exists $\sigma \in S_n$ such that for all $i \in \mathcal{I}$ and $l \in \{1, \dots, n\}$
\[
\alpha_l(i) = c_{\sigma(l)}(i),
\]
then
\[
P(s) = P_\sigma(s)
\]
for all $s \in [0, \infty)$ and, in particular, the pressure is real analytic on each interval $(m,m+1)$ with $m \in \{0, \dots, n-1\}$.
\end{cor}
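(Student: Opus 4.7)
The plan is to derive this corollary as an immediate consequence of Corollary \ref{mainanal} applied separately on every integer interval, together with the fact that all the ordered pressures agree with $P$ on $[n,\infty)$. The pointwise hypothesis $\alpha_l(i) = c_{\sigma(l)}(i)$ for every $l$ is clearly stronger than the hypothesis of Corollary \ref{mainanal} for any fixed $m$: it automatically gives the set equality
\[
\{\alpha_1(i),\dots,\alpha_m(i)\} = \{c_{\sigma(1)}(i),\dots,c_{\sigma(m)}(i)\}
\]
and matches the $(m+1)$-st singular value with $c_{\sigma(m+1)}(i)$.

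I would then fix $m \in \{0,\dots,n-1\}$ arbitrarily, apply Corollary \ref{mainanal} to conclude $P(s) = P_\sigma(s)$ on $[m,m+1]$, and observe that the identity of the permutation $\sigma$ is the same for every such $m$ because the hypothesis is uniform in $l$. Taking the union over $m$ gives $P = P_\sigma$ on $[0,n]$, and for $s \geq n$ we have $P_\sigma(s) = P(s)$ by the very definition extending $P_\sigma$ to $[0,\infty)$. Thus $P = P_\sigma$ on $[0,\infty)$.

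Real analyticity on each $(m,m+1)$ is then automatic: by construction
\[
P_\sigma(s) = \log \sum_{i \in \mathcal{I}} c_{\sigma(1)}(i) \cdots c_{\sigma(m)}(i)\, c_{\sigma(m+1)}(i)^{s-m},
\]
which is the logarithm of a strictly positive real analytic function of $s$ on $(m,m+1)$, hence real analytic there. For $s \in (n,\infty)$ the analogous closed form through determinants gives real analyticity on $(n,\infty)$.

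There is essentially no obstacle here; the only thing to be careful about is emphasising that a single permutation $\sigma$ works simultaneously for all $m$, which is exactly what removes all potential phase transitions at the integers and yields a single analytic formula on each interval $(m,m+1)$ strung together by continuity.
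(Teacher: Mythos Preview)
Your proposal is correct and follows exactly the approach the paper intends: the corollary is stated immediately after Corollary~\ref{mainanal} as the special case obtained under the stronger pointwise hypothesis, and your argument---verifying that the hypothesis of Corollary~\ref{mainanal} holds for every $m$ with the \emph{same} $\sigma$, then patching together the intervals and invoking the definition of $P_\sigma$ on $[n,\infty)$---is precisely what the paper has in mind. There is nothing to add.
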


In light of Theorem \ref{mainmax}, non-trivial phase transitions, i.e., phase transitions occurring at non-integer values, can only happen at points when the maximum of the ordered pressures `changes hands' between two different ordered pressures.  It is not immediately obvious that this is possible, but it does not take long to find such examples.  We will present some examples of non-trivial phase transitions in Section \ref{examplessection}, as well as a simple example where Corollary \ref{mainanal} can be applied to certain intervals.
\\ \\
We conclude this section with the combinatorial observation that, despite there being $n!$ different ordered pressures, there are significantly fewer distinct ones.  In particular, we choose the first $m$ entries in the ordered singular value functions, with the ordering irrelevant, and then choose the $(m+1)$th entry from the remaining $n-m$ choices. As such, if we are interested in analysing the pressure in the interval $[m,m+1)$, for some $m \in \{0, \dots, n-1\}$, then we have to take the maximum of
\begin{equation} \label{numberof}
\left( \begin{array}{c}
n \\
m
\end{array} \right) \cdot \left( \begin{array}{c}
n -m \\
1
\end{array} \right) \  = \  n \left( \begin{array}{c}
n -1 \\
m
\end{array} \right)
\end{equation}
(possibly) distinct functions.

\subsection{Self-affine sets generated by simultaneously triangularisable matrices}

In this section assume that $\{A_i\}_{ i \in \mathcal{I}}$ are all contracting upper triangular matrices and as before write $c_1(i), \dots, c_n(i) \in (0,1)$ for the absolute values of the diagonal entries of $A_i$.  An interesting, and perhaps surprising result, of Falconer and Miao \cite{miao} is that the pressure in this setting only depends on the diagonal entries.  Moreover, they gave a closed form expression for the pressure in the interval $[m,m+1)$ for $m \in \{0, \dots, n-1\}$ as the maximum of functions of the form
\[
\log \ \sum_{ i \in \mathcal{I}} \big(c_{j_1}(i) \cdots c_{j_{m}}(i)  \big)^{m+1-s}  \big(c_{j'_1}(i) \cdots c_{j'_{m+1}}(i)  \big)^{s-m} 
\]
over all independent choices of subsets $\{j_1, \dots , j_{m}\}$ and $\{j'_1, \dots , j'_{m+1}\}$ of $\{1,\dots, n\}$, see \cite[Theorem 2.5]{miao}.  In particular, in the interval $[m,m+1)$, one takes the maximum of
\[
\left( \begin{array}{c}
n \\
m
\end{array} \right) \cdot \left( \begin{array}{c}
n \\
m+1
\end{array} \right)
\]
functions.  For related results see \cite{barany, lammering, manning}.  Since the pressure does not depend on the non-diagonal entries of the matrices, we can apply Theorem \ref{mainmax} also in the upper triangular setting, simply by ignoring the non-diagonal entries.  As such and in view of (\ref{numberof}) we can reduce the number of functions needed in the interval $[m,m+1)$ by a factor of
\[
 \left( \begin{array}{c}
n \\
m+1
\end{array} \right) /  \left( \begin{array}{c}
n-m \\
1
\end{array} \right)
\]
which grows exponentially in $n$ in the central intervals.  More precisely, applying Stirling's formula, the above factor is larger than $2^{n}/(n\sqrt{2n})$ for $n \geq 2$ and choosing $m$ to be the integer part of $n/2$.  The rest of the results in the previous section carry over to the upper triangular case, or indeed any set of matrices which are simultaneously triangularisable, i.e. there exists a basis with respect to which all of the matrices are either upper or lower triangular.  Most notably we have the following general result.
\begin{cor} \label{mainanalytictri}
For products of contracting non-singular simultaneously triangularisable matrices, the pressure is piecewise real analytic.
\end{cor}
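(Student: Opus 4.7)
The plan is to reduce the statement first to the triangular case by a change of basis, then to the diagonal case by invoking \cite[Theorem 2.5]{miao}, and finally apply Corollary~\ref{mainanalytic}. Let $\{A_i\}_{i \in \mathcal{I}}$ be contracting non-singular simultaneously triangularisable matrices, so there exists an invertible matrix $P$ with $B_i := P^{-1} A_i P$ upper triangular for every $i \in \mathcal{I}$, and write $D_i$ for the diagonal matrix whose entries are those on the diagonal of $B_i$.

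The first step is to show that conjugation by a fixed invertible matrix does not change the pressure, i.e., that the pressures of $\{A_i\}$ and $\{B_i\}$ coincide. For $s \in [0,n)$, one uses the identity
\[
\alpha_1(M)\, \alpha_2(M)\, \cdots\, \alpha_k(M) \ =\ \|\wedge^k M\|
\]
together with the multiplicativity $\wedge^k(P^{-1} M P) = \wedge^k(P^{-1}) \wedge^k(M) \wedge^k(P)$ to conclude that the singular value functions of $\{A_i\}$ and $\{B_i\}$ are comparable, i.e., agree up to a multiplicative constant uniformly in $\textbf{\emph{i}} \in \mathcal{I}^*$, with constants depending only on $P$, $n$ and $s$. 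These constants disappear after taking $\frac{1}{k} \log$ and letting $k \to \infty$, so the pressures agree on $[0,n)$. On $[n,\infty)$ they agree trivially since $\det B_i = \det A_i$.

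Next, since each $B_i$ is upper triangular, \cite[Theorem 2.5]{miao} shows that the pressure of $\{B_i\}$ depends only on the moduli of the diagonal entries, and hence the pressure of $\{B_i\}$ coincides with the pressure of the diagonal system $\{D_i\}$. Corollary~\ref{mainanalytic} applied to $\{D_i\}$ then yields piecewise real analyticity of this common pressure, completing the proof.

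The only substantive point is the conjugation invariance of the pressure in the first step; all the other work is done by earlier results. Conjugation invariance is standard, following from the comparability of operator norms of conjugate linear maps on exterior powers, and is not a serious obstacle. As a byproduct, the explicit bound on the number of phase transitions carries over verbatim from the diagonal case, since the pressure function itself is unchanged by the reduction.
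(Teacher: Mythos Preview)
Your argument is correct and follows the same route as the paper: reduce simultaneously triangularisable to upper triangular by a change of basis, invoke Falconer--Miao \cite{miao} to pass to the diagonal part, and then apply Corollary~\ref{mainanalytic}. The paper treats this reduction informally in the discussion preceding the corollary, whereas you spell out the conjugation-invariance of the pressure via exterior powers; the only small point left implicit in both is that the conjugated matrices $B_i$ need not be contractions, but this is harmless since their diagonal entries (being eigenvalues of the $A_i$) still lie in $(0,1)$, and in any case a scaling argument would repair it.
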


\section{Examples} \label{examplessection}

Let $n=3$ and let $T_1$ and $T_2$ be $3 \times 3$ upper triangular matrices with non-zero positive diagonal entries $c_1(1), c_2(1), c_3(1)$ and  $c_1(2), c_2(2), c_3(2)$ respectively.  Theorem \ref{mainmax} and (\ref{numberof}) show that the pressure corresponding to this system is given by the maximum of three functions in the interval $[0,1)$, six functions in the interval $[1,2)$ and three functions in the interval $[2,3)$.  By choosing the diagonal entries appropriately, we can create a phase transition in each of these intervals.  Choosing
\[
c_1(1) = 0.9, c_2(1)=0.4 , c_3(1) =0.6, c_1(2)=0.1 , c_2(2)=0.4, c_3(2)=0.2 
\]
gives the pressure a phase transition at the point $s_1 = 0.5 \in (0,1)$ with $P_-'(s_1)  \approx -0.916 < -0.655 \approx P_+'(s_1)$. Choosing
\[
c_1(1) = 0.1, c_2(1)=0.2 , c_3(1) =0.9, c_1(2)=0.9 , c_2(2)=0.4, c_3(2)=0.2 
\]
gives the pressure a phase transition at a point $s_2 \approx 1.193 \in (1,2)$ with $P_-'(s_2)  \approx -1.469 < -0.978 \approx P_+'(s_2)$.  Finally, choosing
\[
c_1(1) = 0.9, c_2(1)=0.5 , c_3(1) =0.8, c_1(2)=0.9 , c_2(2)=0.5, c_3(2)=0.01 
\]
gives the pressure a phase transition at a point $s_3 \approx 2.156 \in (2,3)$ with $P_-'(s_3) \approx -1.695 < -0.693  \approx P_+'(s_3)$. 

\begin{figure}[H]
	\centering
	\includegraphics[width=162mm]{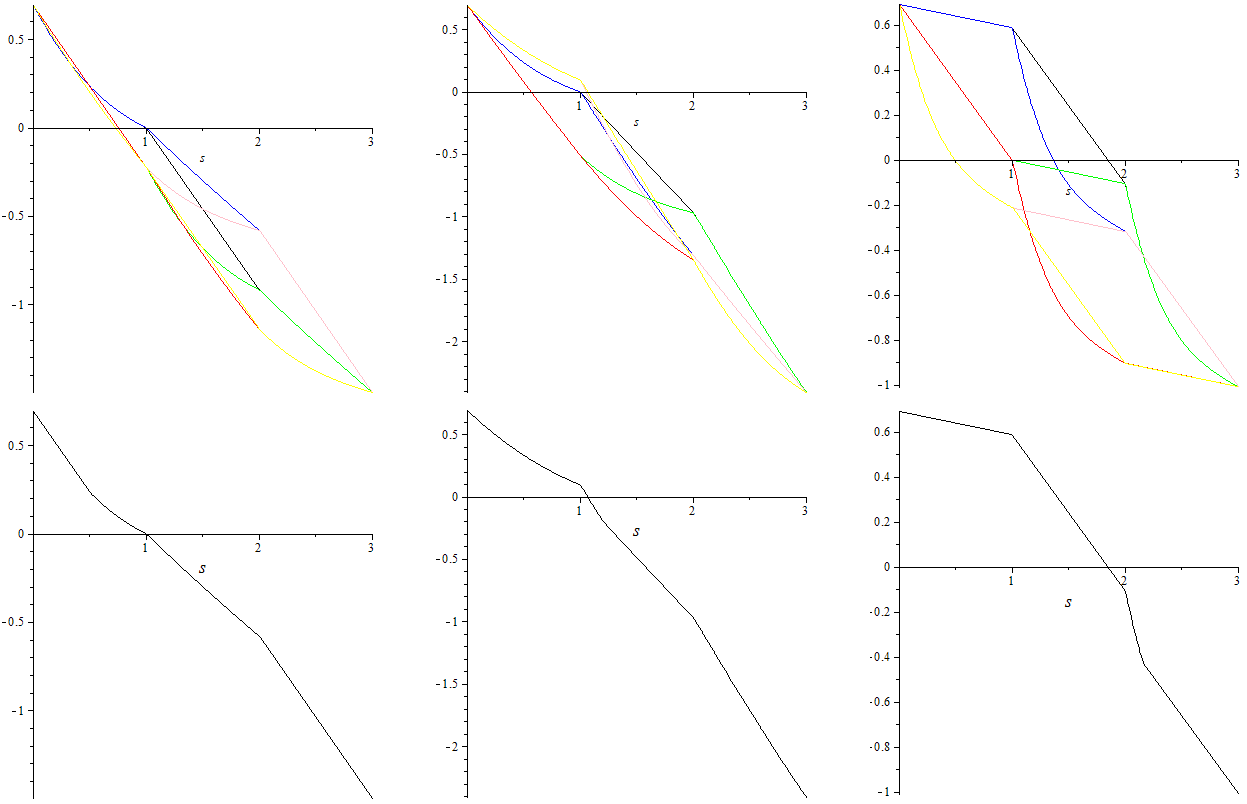}
\caption{Top row: plots of the ordered pressures in the range $[0,3]$ for each of the three examples described above.  The permutations (written as cycles) corresponding to each colour are as follows: black: (1), blue: (23), green: (12), red: (132), pink: (123), yellow: (13).  Bottom row: plots of the standard pressure, which is equal to the maximum of the ordered pressures.}
\end{figure}

For our second example, let $n=7$ and let $T_1$ and $T_2$ be given by
\[
T_1 = \left( \begin{array}{ccccccc}
2 & -6 & 15 & 0 & -2 & 0 & 2 \\
0 &- 1 & 0 & 1 & -6 & 0 & 0 \\
0 & 0 & 10 & 4 & 9 & 6 & 0 \\
0 & 0 & 0 & 8 & -2 & 0 & 1 \\
0 & 0 & 0 & 0 & -5 & -3 & 4 \\
0 & 0 & 0 & 0 & 0 & 7 & 7 \\
0 & 0 & 0 & 0 & 0 & 0 & 4 \\
\end{array} \right)  \hspace{5mm} \text{and} \hspace{5mm} T_2 = \left( \begin{array}{ccccccc}
3 & 2 & 5 & 0 & -6 & -4 & 2 \\
0 & 1 & 2 & 8 & 6 & 1 & 6 \\
0 & 0 & -14 & 1 & 1 & 13 & 3 \\
0 & 0 & 0 & 11 & 9 & 0 & 9 \\
0 & 0 & 0 & 0 & 4 & 10 & 1 \\
0 & 0 & 0 & 0 & 0 & -15 & -5 \\
0 & 0 & 0 & 0 & 0 & 0 & 2\\
\end{array} \right)
\]
Choosing
\[
\sigma =  \left( \begin{array}{ccccccc}
1 & 2 & 3 & 4 & 5 & 6 & 7 \\
3 & 4 & 6 & 5 & 7 & 1 & 2 \\
\end{array} \right) 
\]
we can apply Corollary \ref{mainanal} in the intervals $(3,4)$ and $(6,7)$ to deduce that the pressure $P(s) = P_\sigma(s)$, and is hence real analytic, in these regions.  Of course we could have just plotted all of the ordered pressures and deduced the regions where the maximum was real analytic, however that would involve plotting 140 functions in the interval $(3,4)$, for example.

\newpage

\section{Proofs}

\subsection{Proof of Theorem \ref{mainmax}} \label{mainmaxproof}

\begin{lma} \label{lemma1}
For all $s \in [0,n)$ and $\textbf{i} \in \mathcal{I}^*$, we have $\phi^s(\textbf{i}) =  \max_{\sigma \in S_n} \phi^s_\sigma(\textbf{i})$.
\end{lma}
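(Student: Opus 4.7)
The plan is to handle the two directions of the equality separately. Since $A_\mathbf{i}$ is diagonal (as a product of diagonal matrices), its singular values coincide as a multiset with the absolute values of its diagonal entries, so
\[
\{c_1(\mathbf{i}), \ldots, c_n(\mathbf{i})\} = \{\alpha_1(\mathbf{i}), \ldots, \alpha_n(\mathbf{i})\}
\]
as multisets. Taking $\sigma^\ast \in S_n$ to be any permutation with $c_{\sigma^\ast(l)}(\mathbf{i}) = \alpha_l(\mathbf{i})$ for every $l$ immediately yields $\phi^s_{\sigma^\ast}(\mathbf{i}) = \phi^s(\mathbf{i})$, and hence the easy inequality $\phi^s(\mathbf{i}) \leq \max_{\sigma \in S_n} \phi^s_\sigma(\mathbf{i})$.

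For the reverse inequality I would fix an arbitrary $\sigma \in S_n$ and show $\phi^s_\sigma(\mathbf{i}) \leq \phi^s(\mathbf{i})$. Define $\tau \in S_n$ by $c_{\sigma(l)}(\mathbf{i}) = \alpha_{\tau(l)}(\mathbf{i})$, so that
\[
\phi^s_\sigma(\mathbf{i}) = \alpha_{\tau(1)}(\mathbf{i}) \cdots \alpha_{\tau(m)}(\mathbf{i}) \cdot \alpha_{\tau(m+1)}(\mathbf{i})^{s-m}.
\]
I then propose a two-stage maximisation. In \emph{Stage 1}, fix the unordered set $T := \{\tau(1), \ldots, \tau(m+1)\}$ of size $m+1$ and ask which element of $T$ should play the role of $\tau(m+1)$. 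Rewriting the expression as $\prod_{j \in T} \alpha_j(\mathbf{i}) \cdot \alpha_{\tau(m+1)}(\mathbf{i})^{s-m-1}$ and observing that $s - m - 1 < 0$ makes $x \mapsto x^{s-m-1}$ decreasing on $(0, \infty)$, the maximum is attained when $\alpha_{\tau(m+1)}(\mathbf{i})$ is the smallest of the values indexed by $T$, that is, when $\tau(m+1) = \max T$. In \emph{Stage 2}, writing $T = \{i_1 < \cdots < i_{m+1}\}$, I would compare the resulting $\alpha_{i_1}(\mathbf{i}) \cdots \alpha_{i_m}(\mathbf{i}) \cdot \alpha_{i_{m+1}}(\mathbf{i})^{s-m}$ with $\alpha_1(\mathbf{i}) \cdots \alpha_m(\mathbf{i}) \cdot \alpha_{m+1}(\mathbf{i})^{s-m}$ factor by factor: since the $i_l$ are distinct positive integers in increasing order, $i_l \geq l$, and monotonicity of $(\alpha_k(\mathbf{i}))_k$ gives $\alpha_{i_l}(\mathbf{i}) \leq \alpha_l(\mathbf{i})$ for $l = 1, \ldots, m$; for the final factor I would additionally use that $s - m \geq 0$ makes $x \mapsto x^{s-m}$ increasing, yielding $\alpha_{i_{m+1}}(\mathbf{i})^{s-m} \leq \alpha_{m+1}(\mathbf{i})^{s-m}$. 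Multiplying these term-by-term bounds completes the proof.

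The only subtle point is the sign flip between the two stages: Stage 1 exploits $s - m - 1 < 0$, while Stage 2 exploits $s - m \geq 0$. A single one-shot greedy choice of $\tau$ does not work directly, because maximising $\prod_{j \in T} \alpha_j(\mathbf{i})$ pushes $T$ towards $\{1, \ldots, m+1\}$ (and so forces $\alpha_{\tau(m+1)}(\mathbf{i}) \geq \alpha_{m+1}(\mathbf{i})$), whereas minimising $\alpha_{\tau(m+1)}(\mathbf{i})$ on its own would push $\tau(m+1)$ towards $n$; splitting the optimisation into two stages is what reconciles these competing pulls cleanly.
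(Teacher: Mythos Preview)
Your proof is correct and follows essentially the same two-stage optimisation as the paper's argument: one step fixes the $(m+1)$-element set used and optimises the choice of the $(m+1)$th slot (your Stage~1, the paper's ratio calculation), and the other step shows the optimal set is $\{1,\dots,m+1\}$ (your Stage~2, which the paper asserts in a sentence without writing out the factor-by-factor comparison). The only cosmetic difference is that you run the two stages in the opposite order and spell out Stage~2 explicitly, whereas the paper treats that reduction as evident.
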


\begin{proof}
Let $\textbf{\emph{i}} \in \mathcal{I}^*$ and suppose $s \in  [m,m+1)$ for some $m \in \{0,\dots, n-1\}$. Clearly $\phi^s(\textbf{\emph{i}})$ is equal to $\phi^s_\sigma(\textbf{\emph{i}})$ for some $\sigma$ and so $\phi^s(\textbf{\emph{i}}) \leq  \max_{\sigma \in S_n} \phi^s_\sigma(\textbf{\emph{i}})$.  Also, in trying to maximise  $\phi^s_\sigma(\textbf{\emph{i}})$ over $\sigma$, one must choose a permutation for which
\[
\{\alpha_1(\textbf{\emph{i}}), \dots, \alpha_{m+1}(\textbf{\emph{i}})\} = \{ c_{\sigma(1)}(\textbf{\emph{i}}), \dots, c_{\sigma(m+1)}(\textbf{\emph{i}})\},
\]
i.e. a permutation which `uses' the largest $(m+1)$ singular values and excludes the other (smaller) values.  Fix such a permutation $\sigma$.  Since $\phi^s_\sigma(\textbf{\emph{i}})$ is symmetric in the values $ c_{\sigma(1)}(\textbf{\emph{i}}), \dots , c_{\sigma(m)}(\textbf{\emph{i}})$, the ordering of the first $m$ terms is irrelevant, and so the only question is which singular value to choose as $c_{\sigma(m+1)}(\textbf{\emph{i}})$.  Suppose $\sigma$ is such that $c_{\sigma(m+1)}(\textbf{\emph{i}}) \neq \alpha_{m+1}(\textbf{\emph{i}})$.  Cancelling common terms we have
\[
\frac{\phi^s(\textbf{\emph{i}})}{\phi^s_\sigma(\textbf{\emph{i}})} \ = \ \frac{c_{\sigma(m+1)}(\textbf{\emph{i}}) \, \alpha_{m+1}(\textbf{\emph{i}})^{s-m} }{ \alpha_{m+1}(\textbf{\emph{i}}) \, c_{\sigma(m+1)}(\textbf{\emph{i}})^{s-m} }  \ = \  \bigg( \frac{ c_{\sigma(m+1)}(\textbf{\emph{i}}) }{ \alpha_{m+1}(\textbf{\emph{i}}) }\bigg)^{m+1-s} \ \geq  \ 1
\]
since $c_{\sigma(m+1)}(\textbf{\emph{i}}) \geq \alpha_{m+1}(\textbf{\emph{i}})$ and $m+1-s > 0$, which gives $\phi^s(\textbf{\emph{i}}) \geq  \max_{\sigma \in S_n} \phi^s_\sigma(\textbf{\emph{i}})$ and completes the proof.
\end{proof}

\begin{lma} \label{lemma2}
For all $s \in [0,n)$, we have
\[
\Bigg( \max_{\sigma \in S_n} \ \sum_{\textbf{i} \in \mathcal{I}} \phi^s_\sigma(\textbf{i}) \Bigg)^k  \ \leq \ \sum_{\textbf{i} \in \mathcal{I}^k} \phi^s(\textbf{i})  \ \leq \ n! \ \Bigg(  \max_{\sigma \in S_n} \ \sum_{\textbf{i} \in \mathcal{I}} \phi^s_\sigma(\textbf{i}) \Bigg)^k .
\]
\end{lma}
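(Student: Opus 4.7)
The plan is to derive both inequalities directly from Lemma \ref{lemma1} together with the multiplicativity of $\phi^s_\sigma$ in $\textbf{\emph{i}}$, which is what makes the ordered pressures so much easier to handle than $\phi^s$ itself.

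For the lower bound, I would fix an arbitrary $\sigma \in S_n$ and apply Lemma \ref{lemma1} pointwise to get $\phi^s(\textbf{\emph{i}}) \geq \phi^s_\sigma(\textbf{\emph{i}})$ for every $\textbf{\emph{i}} \in \mathcal{I}^k$. Summing over $\mathcal{I}^k$ and then using multiplicativity of $\phi^s_\sigma$ (which lets us factorise $\sum_{\textbf{\emph{i}} \in \mathcal{I}^k} \phi^s_\sigma(\textbf{\emph{i}}) = \bigl(\sum_{i \in \mathcal{I}} \phi^s_\sigma(i)\bigr)^k$) yields
\[
\sum_{\textbf{\emph{i}} \in \mathcal{I}^k} \phi^s(\textbf{\emph{i}}) \;\geq\; \Bigl(\sum_{i \in \mathcal{I}} \phi^s_\sigma(i)\Bigr)^k.
\]
Since this holds for every $\sigma$, I can take the maximum over $S_n$ on the right and obtain the desired lower bound.

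For the upper bound, the key idea is to replace the pointwise maximum in Lemma \ref{lemma1} by a sum, at the cost of a multiplicative factor of $n!$. Precisely, Lemma \ref{lemma1} gives $\phi^s(\textbf{\emph{i}}) = \max_{\sigma \in S_n} \phi^s_\sigma(\textbf{\emph{i}}) \leq \sum_{\sigma \in S_n} \phi^s_\sigma(\textbf{\emph{i}})$ since all ordered singular value functions are positive. Summing over $\textbf{\emph{i}} \in \mathcal{I}^k$, swapping the order of summation, and again invoking multiplicativity of each $\phi^s_\sigma$, I get
\[
\sum_{\textbf{\emph{i}} \in \mathcal{I}^k} \phi^s(\textbf{\emph{i}}) \;\leq\; \sum_{\sigma \in S_n} \Bigl(\sum_{i \in \mathcal{I}} \phi^s_\sigma(i)\Bigr)^k \;\leq\; n! \, \Bigl(\max_{\sigma \in S_n} \sum_{i \in \mathcal{I}} \phi^s_\sigma(i)\Bigr)^k,
\]
where the last step bounds each of the $n!$ summands by the largest one.

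I do not anticipate any genuine obstacle: the substantive content is entirely contained in Lemma \ref{lemma1}, and the present lemma is a short bookkeeping exercise exploiting positivity, multiplicativity, and the trivial sandwich $\max \leq \sum \leq |S_n|\cdot\max$. The factor $n!$ is harmless for the eventual application to the pressure $P(s)$, since after taking $\frac{1}{k}\log$ and letting $k \to \infty$ it contributes nothing, so the same argument will immediately give $P(s) = \log \max_{\sigma} \sum_i \phi^s_\sigma(i) = \max_\sigma P_\sigma(s)$ on $[0,n)$.
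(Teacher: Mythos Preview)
Your proposal is correct and matches the paper's proof essentially line for line: both bounds come from Lemma~\ref{lemma1} combined with multiplicativity of $\phi^s_\sigma$, using $\max_\sigma \leq \sum_\sigma \leq n!\cdot\max_\sigma$ for the upper bound. The only cosmetic difference is that for the lower bound you fix $\sigma$ first and maximise at the end, whereas the paper writes $\sum_{\textbf{\emph{i}}}\max_\sigma \geq \max_\sigma\sum_{\textbf{\emph{i}}}$ directly; these are the same step.
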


\begin{proof}
Observe that by Lemma \ref{lemma1}
\[
\sum_{\textbf{\emph{i}} \in \mathcal{I}^k} \phi^s(\textbf{\emph{i}}) \  = \ \sum_{\textbf{\emph{i}} \in \mathcal{I}^k}\max_{\sigma \in S_n} \phi_\sigma^s(\textbf{\emph{i}}) \ \geq \ \max_{\sigma \in S_n} \ \sum_{\textbf{\emph{i}} \in \mathcal{I}^k} \phi^s_\sigma(\textbf{\emph{i}}) \ = \  \Bigg(  \max_{\sigma \in S_n} \ \sum_{\textbf{\emph{i}} \in \mathcal{I}} \phi^s_\sigma(\textbf{\emph{i}}) \Bigg)^k
\]
since the ordered singular value functions are multiplicative.  This yields the left hand inequality in the statement of the lemma.  To obtain the right hand inequality, observe that $\phi^s(\textbf{\emph{i}}) = \phi^s_\sigma(\textbf{\emph{i}})$ for some $\sigma$ and so
\[
\sum_{\textbf{\emph{i}} \in \mathcal{I}^k} \phi^s(\textbf{\emph{i}}) \ \leq  \ \sum_{\textbf{\emph{i}} \in \mathcal{I}^k} \ \sum_{\sigma \in S_n} \phi^s_\sigma(\textbf{\emph{i}}) \ = \   \sum_{\sigma \in S_n} \ \Bigg( \sum_{\textbf{\emph{i}} \in \mathcal{I}} \phi^s_\sigma(\textbf{\emph{i}}) \Bigg)^k \ \leq \ n! \ \Bigg(  \max_{\sigma \in S_n} \ \sum_{\textbf{\emph{i}} \in \mathcal{I}} \phi^s_\sigma(\textbf{\emph{i}}) \Bigg)^k 
\]
again using multiplicativity of the ordered singular value functions.
\end{proof}

Theorem \ref{mainmax} now follows easily by applying Lemma \ref{lemma2} to obtain
\[
P(s) \ \geq \  \log \Bigg( \max_{\sigma \in S_n} \ \sum_{\textbf{\emph{i}} \in \mathcal{I}} \phi^s_\sigma(\textbf{\emph{i}}) \Bigg) \ = \   \max_{\sigma \in S_n} \ \log \sum_{\textbf{\emph{i}} \in \mathcal{I}} \phi^s_\sigma(\textbf{\emph{i}})  \ = \  \max_{\sigma \in S_n} \ P_\sigma(s)
\]
and
\[
P(s) \ \leq \  \lim_{k \to \infty} \frac{1}{k} \log n! \ + \  \log \Bigg( \max_{\sigma \in S_n} \ \sum_{\textbf{\emph{i}} \in \mathcal{I}} \phi^s_\sigma(\textbf{\emph{i}}) \Bigg) \ = \   \max_{\sigma \in S_n} \ \log \sum_{\textbf{\emph{i}} \in \mathcal{I}} \phi^s_\sigma(\textbf{\emph{i}})  \ = \  \max_{\sigma \in S_n} \ P_\sigma(s).
\]

\subsection{Proof of Corollary \ref{mainanalytic}} \label{mainanalyticproof}

To prove that $P$ is piecewise real analytic it suffices to show that for a given $m \in \{0, \dots, n-1\}$ and two given permutations $\sigma, \tau \in S_n$, if the ordered pressures $P_\sigma$ and $P_\tau$ are not equal on the entire interval $(m,m+1)$, then their graphs can only intersect a finite number of times. This is equivalent to showing that the function
\begin{eqnarray*}
E(s) &:=& \sum_{i \in \mathcal{I}} \phi_\sigma^s(\textbf{\emph{i}})  - \sum_{i \in \mathcal{I}} \phi_\tau^s(\textbf{\emph{i}})  \\ \\
&=& \sum_{i \in \mathcal{I}} \frac{c_{\sigma(1)} (\textbf{\emph{i}}) c_{\sigma(2)}(\textbf{\emph{i}})  \cdots c_{\sigma(m)}(\textbf{\emph{i}})}{c_{\sigma(m+1)}(\textbf{\emph{i}})^{m}} c_{\sigma(m+1)}(\textbf{\emph{i}})^{s} -   \frac{c_{\tau(1)} (\textbf{\emph{i}}) c_{\tau(2)}(\textbf{\emph{i}})  \cdots c_{\tau(m)}(\textbf{\emph{i}})}{c_{\tau(m+1)}(\textbf{\emph{i}})^{m}} c_{\tau(m+1)}(\textbf{\emph{i}})^{s}
\end{eqnarray*}
has at most finitely many zeros in the interval $(m,m+1)$, assuming it is not identically zero.  However, this is quickly seen to be true since $E(s)$ is a (generalised) Dirichlet polynomial and therefore can have at most $2 \lvert \mathcal{I} \rvert - 1$ zeros in $\mathbb{R}$.  Recall that Dirichlet polynomials are functions of the form
\[
\sum_{i = 1}^N a_i b_i^s
\]
with $a_i \in \mathbb{R}$ and $b_i>0$.  A classical result, which can be proved by applying Rolle's Theorem, is that such functions have at most $N-1$ zeros, provided they are not identically zero.  For further information on zeros of Dirichlet polynomials and related topics, see Jameson \cite{jameson}.
\\ \\
If we are interested in bounding the number of phase transitions explicitly, then the following crude estimate can be deduced.  We can have trivial phase transitions at the points $\{1, \dots, n\}$.  For non-trivial phase transitions in the interval $(m,m+1)$ for $m \in \{0, \dots, n-1\}$, we know that each distinct pair of ordered pressures can give rise to at most $2 \lvert \mathcal{I} \rvert - 1$ phase transitions by the above argument and using (\ref{numberof}) there are at most
\[
\left( \begin{array}{c}
n \left( \begin{array}{c}
n -1 \\
m
\end{array} \right)\\
2
\end{array} \right)  
\]
distinct pairs of ordered pressures.  This yields the following upper bound for the total number of phase transitions:
\[
n \ + \  \big( 2 \lvert \mathcal{I} \rvert -1 \big)\ \sum_{m=0}^{n-1}\left( \begin{array}{c}
n \left( \begin{array}{c}
n -1 \\
m
\end{array} \right)\\
2
\end{array} \right).
\]
We can simplify the summation as follows:
\begin{eqnarray*}
\sum_{m=0}^{n-1}\left( \begin{array}{c}
n \left( \begin{array}{c}
n -1 \\
m
\end{array} \right)\\
2
\end{array} \right)& = & \frac{n^2}{2} \sum_{m=0}^{n-1} \left( \begin{array}{c}
n-1\\
m
\end{array} \right)^2  \ - \ \frac{n}{2} \sum_{m=0}^{n-1} \left( \begin{array}{c}
n-1\\
m
\end{array} \right)\\
\\ \\
& = & \frac{n^2}{2} \left( \begin{array}{c}
2n-2\\
n-1
\end{array} \right)  \ - \ \frac{2^nn}{4} 
\\ \\
 & = &  \frac{n^3}{8n-4} \left( \begin{array}{c}
2n\\
n
\end{array} \right)-\frac{2^nn}{4} \\ \\
 & \sim &  \frac{n \sqrt{n} \,  4^n}{8\sqrt{\pi}} 
\end{eqnarray*}
as $n \to \infty$, where the final line giving the asymptotic value was obtained by applying Stirling's formula to the binomial coefficient.

\subsection{Proof of Corollary \ref{mainanal}} \label{mainanalproof}

Let $m \in \{0, \dots, n-1\}$ and suppose $\sigma \in S_n$ is such that for all $i \in \mathcal{I}$
\[
\{\alpha_1(i), \dots, \alpha_{m}(i)\} = \{ c_{\sigma(1)}(i), \dots, c_{\sigma(m)}(i)\}
\]
and
\[
\alpha_{m+1}(i) =  c_{\sigma(m+1)}(i).
\]
By following the proof of Lemma \ref{lemma1}, it is easily seen that $\phi_\sigma^s(i) = \max_{\sigma' \in S_n} \phi_{\sigma'}^s(i)$ for all $i \in \mathcal{I}$ and $s \in [m,m+1]$, and therefore by Theorem \ref{mainmax}
\[
P(s) = \max_{\sigma' \in S_n} P_{\sigma'}(s) = P_\sigma(s)
\]
for all $s \in [m,m+1]$, completing the proof. \hfill \qed

\section{Some open questions and discussion} \label{questions}

We have proved that the pressure is piecewise real analytic for products of diagonal matrices and simultaneously triangularisable matrices.  However, this falls significantly short of proving this in general and we therefore ask the following question.

\begin{ques}
Is the pressure always piecewise real analytic or at least piecewise differentiable?
\end{ques}

In our setting we can bound the number of phase transitions by
\begin{equation} \label{boundd}
n \ + \   \big( 2 \lvert \mathcal{I} \rvert -1 \big) \ \Bigg(\frac{n^3}{8n-4} \left( \begin{array}{c}
2n\\
n
\end{array} \right)-\frac{2^nn}{4}\Bigg),
\end{equation}
however, this is very crude.  For a fixed spatial dimension, (\ref{boundd}) grows linearly in the number of matrices, which seems reasonable, but for a fixed number of matrices it grows as
\[
  \sim \   \frac{ 2 \lvert \mathcal{I} \rvert -1 }{8\sqrt{\pi}} \, n \sqrt{n} \,  4^n
\]
as the spatial dimension $n \to \infty$, which seems far too fast and gives poor estimates.  For example, for 2 matrices in dimension 5 the explicit bound is 2510.  It would be interesting to search for optimal bounds or to just improve (\ref{boundd}).
\begin{ques}
In the setting of upper triangular matrices, what is the optimal bound on the number of phase transitions for the pressure in terms of $\lvert \mathcal{I} \rvert$ and $n$?
\end{ques}
It would certainly be possible to reduce the bound (\ref{boundd}) via a more careful application of Rolle's Theorem or Descartes' rule of signs, to the Dirichlet polynomial $E(s)$, but we omit further details.  We emphasise that the purpose of this paper is to prove piecewise analyticity and not to study combinatorial issues concerning the sharpness of the bound on the possible number of phase transitions.  Another possible problem to consider is the existence and nature of \emph{higher order phase transitions}, i.e. points for which the pressure is $C^k$ but not $C^{k+1}$ for some $k$.  We have only been able to exhibit 0th order phase transitions, i.e. points where the pressure is continuous but not differentiable.  Since our main result gives an explicit formula for the pressure, it should provide a useful tool in searching for higher order phase transitions, but we have not pursed this here.  Finally, we ask a more open ended question.
\begin{ques}
Is there any interesting geometric or dynamical significance of the ordered pressures in regions where they are strictly less than the subadditive pressure?
\end{ques}

\vspace{9mm}

\begin{centering}

\textbf{Acknowledgements}

This work was completed while the author was a Research Fellow at the University of Warwick where he was financially supported by the EPSRC grant EP/J013560/1.  He thanks Pablo Shmerkin for helpful discussions and for providing some useful references.
\end{centering}

\end{document}